\newtheorem{theorem}{Theorem}
\newtheorem{lemma}{Lemma}
\newtheorem{corollary}{Corollary}
\newtheorem{definition}{Definition}
\newtheorem{proposition}{Proposition}
\def\moverlay{\mathpalette\mov@rlay}
\def\mov@rlay#1#2{\leavevmode\vtop{%
    \baselineskip\z@skip \lineskiplimit-\maxdimen
    \ialign{\hfil$\m@th#1##$\hfil\cr#2\crcr}}}
\newcommand{\charfusion}[3][\mathord]{
  #1{\ifx#1\mathop\vphantom{#2}\fi
    \mathpalette\mov@rlay{#2\cr#3}
  }
  \ifx#1\mathop\expandafter\displaylimits\fi}
\DeclareRobustCommand\bigop[1]{%
  \mathop{\vphantom{\sum}\mathpalette\bigop@{#1}}\slimits@
}
\newcommand{\bigop@}[2]{%
  \vcenter{%
    \sbox\z@{$#1\sum$}%
    \hbox{\resizebox{\ifx#1\displaystyle.9\fi\dimexpr\ht\z@+\dp\z@}{!}{$\m@th#2$}}%
  }%
}
\newcommand{\cupdot}{\charfusion[\mathbin]{\cup}{\cdot}}
\newcommand{\lca}{\operatorname{lca}}
\newcommand{\parent}{\operatorname{parent}}
\newcommand{\child}{\operatorname{child}}
\newcommand{\HH}{\mathcal{H}}
\newcommand{\LL}{L^{(2)}}
\newcommand{\SPEC}{\newmoon}
\newcommand{\HHGT}{\triangle}
\newcommand{\DUPL}{\square}
\newcommand{\OTHER}{\HHGT}
\newcommand{\EHGT}{\mathbb{I}}
\newcommand{\AX}[1]{\textnormal{#1}}
\providecommand{\keywords}[1]{\textbf{\textit{Keywords: }} #1}
\title{Combining Orthology and Xenology Data in a\\Common Phylogenetic Tree}
\author[1]{Marc Hellmuth}
\author[2]{Mira Michel}
\author[3]{Nikolai N. N{\o}jgaard}
\author[4,5]{David Schaller}
\author[4-8]{Peter F.\ Stadler}
\affil[1]{Department of Mathematics, Faculty of Science,
  Stockholm University, SE-10691 Stockholm, Sweden
  \texttt{marc.hellmuth@math.su.se}}
\affil[2]{Faculty of Mathematics and Computer Science,
  Fernuniversit{\"a}t Hagen, Universit{\"a}tsstrasse 47, D-58097 Hagen, Germany
  \texttt{mira.michel@studium.fernuni-hagen.de}}
\affil[3]{Department of Mathematics and Computer Science, University of Southern
  Denmark, Odense M, Denmark
  \texttt{nnoej10@gmail.com}}
\affil[4]{Max Planck Institute for Mathematics in the
  Sciences, Leipzig, Germany 
  \texttt{sdavid@bioinf.uni-leipzig.de}}
\affil[5]{Bioinformatics Group, Department of Computer Science, and
  Interdisciplinary Center for Bioinformatics, Universit{\"a}t Leipzig,
  H{\"a}rtelstrasse 16-18, D-04107 Leipzig, Germany
  \texttt{studla@bioinf.uni-leipzig.de}}
\affil[6]{Institute for Theoretical Chemistry,
  University of Vienna, Vienna, Austria}
\affil[7]{Facultad de Ciencias, Universidad Nacional de Colombia, Bogot{\'a},
  Colombia}
\affil[8]{Santa Fe Institute, Santa Fe, New Mexico, USA}
\date{\ }
\begin{document}
  
\maketitle 

\abstract{
  A rooted tree $T$ with vertex labels $t(v)$ and set-valued edge labels
  $\lambda(e)$ defines maps $\delta$ and $\varepsilon$ on the pairs of
  leaves of $T$ by setting $\delta(x,y)=q$ if the last common ancestor
  $\lca(x,y)$ of $x$ and $y$ is labeled $q$, and $m\in \varepsilon(x,y)$ if
  $m\in\lambda(e)$ for at least one edge $e$ along the path from
  $\lca(x,y)$ to $y$. We show that a pair of maps $(\delta,\varepsilon)$
  derives from a tree $(T,t,\lambda)$ if and only if there exists a common
  refinement of the (unique) least-resolved vertex labeled tree
  $(T_{\delta},t_{\delta})$ that explains $\delta$ and the (unique) least
  resolved edge labeled tree $(T_{\varepsilon},\lambda_{\varepsilon})$ that
  explains $\varepsilon$ (provided both trees exist). This result remains
  true if certain combinations of labels at incident vertices and edges are
  forbidden.
}

\bigskip
\noindent
\keywords{
  Mathematical phylogenetics,
  rooted trees,
  binary relations,
  symbolic ultrametric,
  Fitch map,
  consistency
  }

\sloppy

\section{Introduction} 

An important task in evolutionary biology and genome research is to
disentangle the mutual relationships of related genes. The evolution of a
gene family can be understood as a tree $T$ whose leaves are genes and
whose inner vertices correspond to evolutionary events, in particular
speciations (where genomes are propagated into different lineages that
henceforth evolve independently), duplications (of genes within the same
genome) and horizontal gene transfer (where copies of an individual genes
are transferred into an unrelated species) \cite{Fitch:00}. Mathematically,
these concepts are described in terms of rooted trees $T$ with vertex
labels $t$ representing event types and edge labels $\lambda$
distinguishing vertical and horizontal inheritance. On the other hand,
orthology (descent from a speciation) or xenology (if the common history
involves horizontal transfer events) can be regarded as binary relation on
the set $L$ of genes. Given the orthology or xenology relationships, one
then asks whether there exists a vertex or edge labeled tree $T$ with leaf
set $L$ that ``explains'' the relations
\cite{Hellmuth:13a,Geiss:18a}. Here, we ask when such relational orthology
and xenology data are consistent. A conceptually similar question is
addressed in a very different formal setting in \cite{Jones:17}.

Instead of considering a single binary orthology or xenology relation, we
consider here multiple relations of each type. This is more conveniently
formalized in terms of maps that assign finite sets of labels.  Two types
of maps are of interest: Symbolic ultrametrics, i.e., symmetric maps
determined by a label at the last common ancestor of two genes
\cite{Boecker:98} generalize orthology. Fitch maps, i.e., non-symmetric
maps determined by the union of labels along the path connecting two genes
\cite{Hellmuth:20c}, form a generalization of xenology. For both types of
maps unique least-resolved trees (minimal under edge-contraction) exist and
can be constructed by polynomial time algorithms
\cite{Boecker:98,Hellmuth:20c}. Here we consider the problem of finding
trees that are simultaneously edge- and vertex-labeled and simultaneously
explain both types of maps. We derive a simple condition for the existence
of explaining trees and show that there is again a unique least-resolved
tree among them. We then consider a restricted version of problem motivated
by concepts of observability introduced in \cite{Nojgaard:18a}.

\section{Preliminaries} 

\subsection{Trees and Hierarchies} 

Let $T$ be a rooted tree with vertex set $V(T)$, leaf set
$L(T)\subseteq V(T)$, set of inner vertices
$V^0(T)\coloneqq V(T)\setminus L(T)$, root $\rho\in V^0(T)$, and edge set
$E(T)$. An edge $e=\{u,v\}\in E(T)$ is an \emph{inner} edge if
$u,v\in V^0(T)$.  The ancestor partial order on $V(T)$ is defined by
$x\preceq_T y$ whenever $y$ lies along the unique path connecting $x$ and
the root.  We write $x \prec_T y$ if $x\preceq_T y$ and $x\ne y$.  For
$v\in V(T)$, we set
$\child(v)\coloneqq\{u\mid\{v,u\}\in E(T),\, u\prec_T v\}$.  All trees $T$
considered here are \emph{phylogenetic}, i.e., they satisfy
$|\child(v)|\ge 2$ for all $v\in V^0(T)$. The \emph{last common ancestor}
of a vertex set $W\subseteq V(T)$ is the unique $\preceq_T$-minimal vertex
$\lca_T(W)\in V(T)$ satisfying $w\preceq_T\lca_T(W)$ for all $w\in W$. For
brevity, we write $\lca_T(x,y)\coloneqq\lca_T(\{x,y\})$. Furthermore, we
will sometimes write $vu\in E(T)$ as a shorthand for ``$\{u,v\}\in E(T)$
with $u\prec_T v$.'' We denote by $T(u)$ the subtree of $T$ rooted in $u$
and write $L(T(u))$ for its leaf set.

Furthermore, $L^T_v\coloneqq \{(x,y)\mid x,y\in L(T), \lca_T(x,y)=v\}$
denotes the set of pairs of leaves that have $v$ as their last common
ancestor. By construction, $L^T_v\cap L^T_{v'}=\emptyset$ if $v\ne
v'$. Since $T$ is phylogenetic, we have $L^T_v\ne\emptyset$ for all
$v\in V^0(T)$, i.e., $\mathcal{L}(T)\coloneqq \{L^T_v\mid v\in V^0(T)\}$ is
a partition of the set of distinct pairs of vertices.

A hierarchy on $L$ is set system $\HH\subseteq 2^L$ such that (i)
$L\in\HH$, (ii) $A\cap B\in\{A,B,\emptyset\}$ for all $A,B\in\HH$, and
(iii) $\{x\}\in\HH$ for all $x\in L$. There is a well-known bijection
between rooted phylogenetic trees $T$ with leaf set $L$ and hierarchies on
$L$, see e.g.\ \cite[Thm.\ 3.5.2]{Semple:03}. It is given by
$\HH(T) \coloneqq \{ L(T(u)) \mid u\in V(T) \}$; conversely, the tree
$T_{\HH}$ corresponding to a hierarchy $\HH$ is the Hasse diagram w.r.t.\
set inclusion. Thus, if $v=\lca_T(A)$ for some $A\subseteq L(T)$, then
$L(T(v))$ is the inclusion-minimal cluster in $\HH(T)$ that contains $A$
\cite{Hellmuth:21q}.

Let $T$ and $T^*$ be phylogenetic trees with $L(T)=L(T^*)$. We say that
$T^*$ is a \emph{refinement} of $T$ if $T$ can be obtained from $T^*$ by
contracting a subset of inner edges or equivalently if and only if
$\HH(T)\subseteq\HH(T^*)$.

\begin{lemma}
  Let $T^*$ be a refinement of $T$ and $u^*v^*\in E(T^*)$. Then there is a
  unique vertex $w\in V(T)$ such that $L(T(w))\in \HH(T)$ is
  inclusion-minimal in $\HH(T)$ with the property that
  $L(T^*(v^*))\subsetneq L(T(w))$. In particular, if $\lca_{T^*}(x,y)=u^*$,
  then $\lca_T(x,y)=w$.
  \label{lem:technical}
\end{lemma}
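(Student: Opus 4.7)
The plan is to exploit the bijection between phylogenetic trees and hierarchies together with the inclusion $\HH(T)\subseteq \HH(T^*)$ provided by the refinement hypothesis. The key structural fact I will invoke repeatedly is that, in a hierarchy, any two clusters having nonempty intersection are comparable by inclusion; consequently, the clusters of $\HH(T)$ strictly containing any fixed nonempty set $A\subsetneq L$ form a chain under inclusion, and in particular admit a unique minimal element.

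For the existence and uniqueness of $w$, I would first observe that $L(T^*(v^*))$ is a nonempty proper subset of $L$: nonempty since every vertex of the phylogenetic tree $T^*$ has a leaf descendant, and proper because $v^*\prec_{T^*} u^*$ means $v^*$ is not the root of $T^*$. Hence the family of clusters in $\HH(T)$ strictly containing $L(T^*(v^*))$ is nonempty (it contains $L$ itself) and forms a chain, so it has a unique minimal element $L(T(w))$; the vertex $w$ is then uniquely determined by the injectivity of $u\mapsto L(T(u))$ that underlies the tree/hierarchy bijection.

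For the second claim, set $w'\coloneqq \lca_T(x,y)$; it suffices to show $w'=w$. Since $\HH(T)\subseteq \HH(T^*)$, the cluster $L(T(w'))$ also lies in $\HH(T^*)$ and contains $\{x,y\}$, hence contains the inclusion-minimal such $\HH(T^*)$-cluster, namely $L(T^*(u^*))$. As $v^*$ is a child of $u^*$, this yields $L(T^*(v^*))\subsetneq L(T^*(u^*))\subseteq L(T(w'))$, so $w'$ is a candidate for $w$. To establish minimality, suppose for contradiction that some $w_0\in V(T)$ satisfies $L(T^*(v^*))\subsetneq L(T(w_0))\subsetneq L(T(w'))$. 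The two clusters $L(T(w_0))$ and $L(T^*(u^*))$ of $\HH(T^*)$ share $L(T^*(v^*))$ and are therefore comparable. If $L(T^*(u^*))\subseteq L(T(w_0))$, then $\{x,y\}\subseteq L(T(w_0))$, which by minimality of $L(T(w'))$ among $T$-clusters containing $\{x,y\}$ forces $L(T(w'))\subseteq L(T(w_0))$, contradicting $L(T(w_0))\subsetneq L(T(w'))$. Otherwise, $L(T(w_0))\subsetneq L(T^*(u^*))$, which via the tree/hierarchy bijection produces a $T^*$-vertex $z^*$ with $v^*\prec_{T^*} z^*\prec_{T^*} u^*$, contradicting that $u^*v^*$ is an edge of $T^*$.

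The main obstacle I anticipate is this final case analysis, where one must juggle clusters living simultaneously in the two hierarchies $\HH(T)$ and $\HH(T^*)$; the decisive observation is that ``$u^*v^*\in E(T^*)$'' translates, via the tree/hierarchy bijection, into the absence of any $T^*$-cluster strictly between $L(T^*(v^*))$ and $L(T^*(u^*))$. Everything else amounts to a direct unfolding of the definitions of refinement, phylogenetic tree, and last common ancestor.
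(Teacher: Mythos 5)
Your proof is correct and follows essentially the same route as the paper's: both arguments translate everything into the hierarchy language via $\HH(T)\subseteq\HH(T^*)$, obtain $w$ as the unique minimal element of the chain of $\HH(T)$-clusters strictly containing $L(T^*(v^*))$, and then identify $\lca_T(x,y)$ with $w$ using nestedness of intersecting clusters. The only difference is organizational: the paper first shows $z\coloneqq\lca_T(x,y)\preceq_T w$ and then invokes the minimality of $w$, whereas you show directly that $L(T(\lca_T(x,y)))$ is itself the inclusion-minimal candidate by ruling out an intermediate cluster via the edge property of $u^*v^*$ --- both variants rest on the same facts and are equally valid.
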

\begin{proof}
  Let $u^*v^*\in E(T^*)$. Since $\HH(T)\subseteq\HH(T^*)$,
  $L(T)=L(T^*)\in \HH(T)$ and $v^*$ is not the root of $T^*$, there is a
  unique inclusion-minimal $A\in\HH(T)$ with $L(T^*(v^*))\subsetneq A$,
  which corresponds to a unique vertex $w\in V(T)$ that satisfies
  $L(T(w))=A$.
  In the following, we denote with $w^*\in V(T^*)$ the unique vertex that
  satisfies $A=L(T^*(w^*))$, which exists since
  $A\in \HH(T)\subseteq\HH(T^*)$.  Now let $x,y\in L(T)$ be two leaves with
  $\lca_{T^*}(x,y)=u^*$.  From $v^*\prec_{T^*} u^*$, we obtain
  $L(T^*(v^*))\subsetneq L(T^*(u^*))$ and
  $L(T^*(u^*))\subseteq L(T^*(w^*))=L(T(w))$. Hence, we have
  $L(T^*(u^*))\subseteq L(T(w))$, which implies $x,y\in L(T(w))$ and thus
  also $z\coloneqq \lca_T(x,y)\preceq_T w$. Denote by $z^*\in V(T^*)$ the
  unique vertex in $T^*$ with $L(T^*(z^*))=L(T(z))$.  Since $z\preceq_T w$,
  it satisfies $L(T^*(z^*))\subseteq L(T^*(w^*))$. Since
  $x,y\in L(T^*(z^*))\cap L(T^*(u^*))\neq \emptyset$, we either have
  $L(T^*(u^*))\subseteq L(T^*(z^*))$ or
  $L(T^*(z^*))\subsetneq L(T^*(u^*))$. In the second case, we obtain
  $\lca_{T^*}(x,y)\preceq_{T^*} z^*\prec_{T^*} u^*$, a contradiction to
  $\lca_{T^*}(x,y)=u^*$. In the first case, we have
  $L(T^*(v^*))\subsetneq L(T^*(u^*))\subseteq L(T(z))\subseteq L(T(w))$.
  Due to inclusion minimality of $L(T(w))$ we have
  $L(T(z)) = L(T(w))$. Thus $\lca_{T}(x,y) = z = w$. 
\end{proof}
Lemma~\ref{lem:technical} ensures that, for every $u^*\in V^0(T^*)$, there
is a unique $w\in V(T)$ such that $\lca_T(x,y)=w$ for all
$(x,y)\in L_{u^*}^{T^*}$, and thus $L_{u^*}^{T^*}\subseteq L_w^T$. Thus we
have 
\begin{corollary}
  \label{cor:technical}
  If $T^*$ is a refinement of $T$, then the partition $\mathcal{L}(T^*)$ is
  a refinement of $\mathcal{L}(T)$.
\end{corollary}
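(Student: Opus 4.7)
The strategy is to unpack the sentence immediately preceding the corollary and recast it in partition-theoretic language. Recall that $\mathcal{L}(T)=\{L_v^T\mid v\in V^0(T)\}$ and $\mathcal{L}(T^*)=\{L_{u^*}^{T^*}\mid u^*\in V^0(T^*)\}$ both partition the set of ordered pairs of distinct leaves of $L=L(T)=L(T^*)$; this holds because both trees are phylogenetic, so for every such pair $(x,y)$ the last common ancestor is an inner vertex. To establish refinement, I plan to show that each block $L_{u^*}^{T^*}$ of $\mathcal{L}(T^*)$ is contained in some block $L_w^T$ of $\mathcal{L}(T)$.

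Fix an arbitrary $u^*\in V^0(T^*)$. Since $T^*$ is phylogenetic, $u^*$ has a child $v^*$, so $u^*v^*\in E(T^*)$ with $v^*$ not the root. Applying Lemma~\ref{lem:technical} to this edge yields a vertex $w\in V(T)$ with the key property that $\lca_T(x,y)=w$ whenever $\lca_{T^*}(x,y)=u^*$. This says exactly $L_{u^*}^{T^*}\subseteq L_w^T$, which is what I need. Since $u^*$ was arbitrary, every block of $\mathcal{L}(T^*)$ lies inside a (unique) block of $\mathcal{L}(T)$, and this is by definition what it means for $\mathcal{L}(T^*)$ to refine $\mathcal{L}(T)$.

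The main (and essentially only) obstacle is purely notational: one should observe that the associated vertex $w$ depends only on $u^*$ and not on the auxiliary child $v^*$ chosen to invoke Lemma~\ref{lem:technical}. This is immediate, however, because the phylogenetic property guarantees that $L_{u^*}^{T^*}$ is nonempty (pick leaves beneath two distinct children of $u^*$), and for any such pair $(x,y)$ the lemma forces $w=\lca_T(x,y)$, so $w$ is pinned down uniquely by $u^*$. With this remark the proof is complete, as Lemma~\ref{lem:technical} carries all the real content.
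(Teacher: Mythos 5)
Your argument is correct and is essentially the paper's own: the corollary is derived directly from Lemma~\ref{lem:technical} by noting that for each $u^*\in V^0(T^*)$ there is a vertex $w\in V(T)$ with $L_{u^*}^{T^*}\subseteq L_w^T$, which is the definition of partition refinement. Your added remark that $w$ does not depend on the auxiliary child $v^*$ is a harmless extra precaution that the paper leaves implicit.
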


\subsection{Symbolic Ultrametrics}

We write $\LL \coloneqq \{(x,y)\mid x,y\in L,\, x\ne y\}$ for the
``off-diagonal'' pairs of leafs and let $M$ be a finite set.
\begin{definition} 
  A tree $T$ with leaf set $L$ and labeling $t:V^0(T)\to M$ of its inner
  vertices \emph{explains} a map $\delta:\LL\to M$ if
  $t(\lca(x,y)) = \delta(x,y)$ for all distinct $x,y\in L$.
\end{definition}
Such a map must be symmetric since $\lca_T(x,y)=\lca_T(y,x)$ for all
$x,y\in L$. A shown in \cite{Boecker:98}, a map $\delta:\LL\to M$ can be
explained by a labeled tree $(T,t)$ if and only if $\delta$ is a
\emph{symbolic ultrametric}, i.e., iff, for all pairwise distinct
$u,v,x,y\in L$ holds (i) $\delta(x,y)=\delta(y,x)$ (symmetry), (ii)
$\delta(x,y)=\delta(y,u)=\delta(u,v)\ne\delta(y,v)=\delta(x,v)=\delta(x,u)$
is never satisfied (co-graph property), and (iii)
$|\{\delta(u,v),\delta(u,x),\delta(v,x)\}|\le 2$ (exclusion of rainbow
triangles). In this case, there exists a unique least-resolved tree
$(T_{\delta},t_{\delta})$ (that explains $\delta$) with a discriminating
vertex labeling $t_{\delta}$, i.e., $t_{\delta}(x)\ne t_{\delta}(y)$ for
all $ xy\in E(T_{\delta})$ \cite{Boecker:98,Hellmuth:13a}. This tree
$(T_{\delta},t_{\delta})$ is also called a discriminating representation of
$\delta$ \cite{Boecker:98}.

The construction of symbolic ultrametrics could also be extended to maps
$\tilde\delta:\LL\to 2^M$, i.e, to allow multiple labels at each
vertex. However, this does not introduce anything new. To see this, we note
that the sets of vertex pairs $L^T_v$ that share the same last common
ancestor are pairwise disjoint. In particular, $\tilde\delta$ thus must be a
fixed element in $2^M$ on each $L^T_v$, $v\in V^0$, and thus we think of
the images $\tilde\delta(x,y)$ simply as single labels ``associated to''
elements in $2^M$ rather than sets of labels.

\begin{lemma} \label{lem:orthoclu}
  Let $\delta:\LL\to M$ be a symbolic ultrametric with least-resolved tree
  $(T_{\delta},t_{\delta})$. Then there is a map $t:V(T)\to M$ such that
  $(T,t)$ explains $\delta$ if and only if $T$ is a refinement of
  $T_{\delta}$. In this case, the map $t$ is uniquely determined by $T$ and
  $\delta$.
\end{lemma}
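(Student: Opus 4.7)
\medskip

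The plan is to prove both implications separately and then derive uniqueness as an immediate consequence of the existence argument. For the easy direction of uniqueness: if $(T,t)$ explains $\delta$ then for every inner vertex $u$ of $T$ and every pair $(x,y)\in L_u^T$ we must have $t(u)=\delta(x,y)$; since $T$ is phylogenetic, $L_u^T\neq\emptyset$, so $t(u)$ is forced by $\delta$. This simultaneously shows that the labeling is uniquely determined, and it shows that $\delta$ must be constant on each block $L_u^T$ of $\mathcal{L}(T)$.

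For the ``if'' direction, suppose $T$ is a refinement of $T_\delta$. By Corollary~\ref{cor:technical}, $\mathcal{L}(T)$ refines $\mathcal{L}(T_\delta)$, so for each $u\in V^0(T)$ there is a unique $w=w(u)\in V^0(T_\delta)$ with $L_u^T\subseteq L_{w(u)}^{T_\delta}$. Define $t(u)\coloneqq t_\delta(w(u))$. Then for any distinct $x,y\in L$, letting $u=\lca_T(x,y)$ we have $(x,y)\in L_u^T\subseteq L_{w(u)}^{T_\delta}$, hence $\lca_{T_\delta}(x,y)=w(u)$ and therefore $t(\lca_T(x,y))=t(u)=t_\delta(w(u))=t_\delta(\lca_{T_\delta}(x,y))=\delta(x,y)$, so $(T,t)$ explains $\delta$.

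For the ``only if'' direction, suppose $(T,t)$ explains $\delta$. I will show that $T_\delta$ can be obtained from $T$ by contracting inner edges, which is exactly the definition of $T$ being a refinement of $T_\delta$. Let $T'$ be obtained from $T$ by contracting every inner edge $uv\in E(T)$ with $t(u)=t(v)$; the contracted vertex inherits the common label. A standard check shows that $(T',t')$ still explains $\delta$ and that the resulting labeling $t'$ is discriminating in the sense that adjacent vertices carry distinct labels. By the uniqueness of the discriminating representation of a symbolic ultrametric \cite{Boecker:98,Hellmuth:13a}, $(T',t')=(T_\delta,t_\delta)$. Hence $T_\delta$ is obtained from $T$ by inner-edge contractions, i.e., $T$ is a refinement of $T_\delta$.

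The main obstacle I foresee is the ``only if'' direction: it requires invoking the uniqueness of the discriminating representation, and making the contraction argument rigorous requires verifying that contracting edges between equally-labeled endpoints preserves the property of explaining $\delta$ (which reduces to checking that $\lca$-labels are unaffected by such contractions) and that the result is phylogenetic. The other two claims are essentially bookkeeping consequences of Corollary~\ref{cor:technical}.
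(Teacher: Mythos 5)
Your proposal is correct and follows essentially the same route as the paper: the ``if'' direction and the uniqueness claim via Corollary~\ref{cor:technical} and the fact that every inner vertex is a last common ancestor, and the ``only if'' direction by contracting equally-labeled inner edges and appealing to the uniqueness of the discriminating representation $(T_{\delta},t_{\delta})$. The only cosmetic difference is that the paper contracts such edges one at a time and justifies each step with Lemma~\ref{lem:technical}, whereas you contract them all at once and defer the verification to a ``standard check''; that check is exactly the content the paper supplies, so nothing substantive is missing.
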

\begin{proof}
  Suppose $(T,t)$ explains $\delta$ and let $e=vu\in E(T)$ be an edge with
  $\delta(u)=\delta(v)$ and $u\prec v$.  Note that both $u$ and $v$ must be
  inner vertices.  Let $T/e$ denote the tree obtained from $T$ by
  contracting the edge $e$, i.e., removing $e$ from $T$ and identifying $u$
  and $v$.  We will keep the vertex $v$ in $T/e$ as placeholder for the
  identified vertices $u$ and $v$.  By construction, $T/e$ has the clusters
  $\HH(T/e)=\HH(T)\setminus\{L(T(u))\}$. Set $t_{T/e}(x)=t(x)$ for all
  $x\in V^0(T)\setminus\{u\}$.  Clearly, $v$ is the unique vertex in $T/e$
  such that $L((T/e)(v))$ is inclusion-minimal with property
  $L(T(u'))\subsetneq L((T/e)(v))$ for any $u'\child_{T}(u)$. Therefore, by
  Lemma~\ref{lem:technical}, $\lca_{T}(x,y)=u$ implies $\lca_{T/e}(x,y)=v$,
  and thus, we have $t(\lca_T(x,y))=t_{T/e}(\lca_{T/e}(x,y))$ for all
  $(x,y)\in\LL$, and thus $(T/e,t_{T/e})$ explains $\delta$. Stepwise
  contraction of all edges whose endpoints have the same label eventually
  results in a tree $T'$ and a map $t'$ such that $t'(x)\ne t'(y)$ for all
  edges of $T'$. Thus $(T',t')$ coincides with the unique discriminating
  representation of $\delta$, i.e., $(T',t')=(T_{\delta},t_{\delta})$. By
  construction, $T$ is a refinement of $T_{\delta}$.

  Conversely, let $\delta$ be a symbolic ultrametric with (unique)
  discriminating representation $(T_{\delta},t_{\delta})$ and let $T$ be a
  refinement of $T_{\delta}$. By Cor.~\ref{cor:technical}, $\mathcal{L}(T)$
  is a refinement $\mathcal{L}(T_{\delta})$. Hence, the map $t:V^0(T)\to M$
  specified by $t(\lca_T(x,y))\coloneqq t_{\delta}(\lca_{T_{\delta}}(x,y))$
  for all $(x,y)\in \LL$ is well-defined. By construction, therefore,
  $(T,t)$ explains $\delta$. In particular, therefore, every refinement $T$
  of $T_{\delta}$ admits a vertex labeling $t$ such that $(T,t)$ explains
  $\delta$.  The choice of $t$ is unique since every inner vertex of a
  phylogenetic tree is the last common ancestor of at least one pair of
  vertices, and thus no relabeling of an inner vertex preserves the
  property that the resulting tree explains $\delta$. 
\end{proof}

\subsection{Fitch Maps}
  
\begin{definition} 
  A tree $T$ with edge labeling $\lambda:E(T)\to 2^N$, with finite $N$,
  explains a map $\varepsilon:\LL\to 2^N$ if for all $k\in N$ holds:
  $k\in\varepsilon(x,y)$ iff $k\in\lambda(e)$ for some edge along the
  unique path in $T$ that connects $\lca_T(x,y)$ and $y$.
\end{definition}
A map $\varepsilon:\LL\to 2^N$ that is explained by a tree $(T,\lambda)$ in
this manner is a \emph{Fitch map} \cite{Hellmuth:20c}. A Fitch map is
called \emph{monochromatic} if $|N|=1$.  Like symbolic ultrametrics, Fitch
maps are explained by unique least resolved trees. The key construction is
provided by the sets
$U_{\neg m}[y]\coloneqq \{x\in L\setminus\{y\}\mid
m\notin\varepsilon(x,y)\} \cup\{y\}$ for $y\in L$ and $m\in N$. Let us
write
$\mathcal{N}_{\varepsilon}\coloneqq\{ U_{\neg m}[y] \mid y\in L,\, m\in
N\}$. Then $\varepsilon$ is a Fitch map if and only if (i)
$\mathcal{N}_{\varepsilon}$ is hierarchy-like, i.e.,
$A\cap B\in\{A,B,\emptyset\}$ for all $A,B\in\mathcal{N}_{\varepsilon}$ and
(ii) $|U_{\neg m}[y']|\le |U_{\neg m}[y]|$ for all $y\in L$, $m\in N$, and
$y'\in U_{\neg m}[y]$ \cite[Thm.~3.11]{Hellmuth:20c}.

Fitch maps allow some freedom in distributing labels on the edge set. The
precise notion of ``least-resolved'' thus refers to the fact that it is
neither possible to contract edges nor to remove subsets of labels from an
edge. The unique least-resolved tree for a Fitch map $\varepsilon$, called
the $\varepsilon$-tree $(T_{\varepsilon},\lambda_{\varepsilon})$, is
determined by the hierarchy
$\HH(T_{\varepsilon})=\mathcal{N}_{\varepsilon}\cup\{L\}\cup
\big\{\{x\}\mid x\in L\big\}$ and the labeling
$\lambda_{\varepsilon}(\parent(v),v)\coloneqq\{m\in N \mid \exists y\in L
\text{ s.t.\ } L(T_{\varepsilon}(v))=U_{\neg m}[y]\}$ for all
$e=\{\parent(v),v\}\in E(T_{\varepsilon})$ \cite[Thm.~4.4]{Hellmuth:20c}.

Let $(T,\lambda)$ and $(T',\lambda')$ be two edge-labeled trees on the same
leaf set and with $\lambda:E(T)\to 2^N$ and $\lambda':E(T')\to 2^N$. Then
$(T,\lambda)$ is a refinement of $(T',\lambda')$, in symbols
$(T',\lambda')\le(T, \lambda)$ if (i) $\HH(T')\subseteq \HH(T)$ and (ii) if
$L(T(v))=L(T'(v'))$, then
$\lambda'(\parent_{T'}(v'),v')\subseteq \lambda(\parent_{T}(v),v)$.
\begin{proposition} 
  \label{prop:coarse}
  \cite[Prop.4.3, Thm.4.4]{Hellmuth:20c}\quad If $(T,\lambda)$ explains
  $\varepsilon$, then
  $(T_{\varepsilon},\lambda_{\varepsilon})\le(T,\lambda)$. Furthermore,
  $(T_{\varepsilon},\lambda_{\varepsilon})$ is the unique least-resolved
  tree that explains $\varepsilon$.  In particular,
  $(T_{\varepsilon},\lambda_{\varepsilon})$ minimizes
  $\ell_{\min}\coloneqq \sum_{e\in E(T_{\varepsilon})}
  |\lambda_{\varepsilon}(e)|$.
\end{proposition}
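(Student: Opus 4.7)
The plan is to first verify that $(T_\varepsilon,\lambda_\varepsilon)$ indeed explains $\varepsilon$ (a direct check from the definitions of $\HH(T_\varepsilon)$ and $\lambda_\varepsilon$), then establish the refinement statement $(T_\varepsilon,\lambda_\varepsilon)\le(T,\lambda)$ for every $(T,\lambda)$ that explains $\varepsilon$, and finally deduce uniqueness of the least-resolved tree and the minimality of $\ell_{\min}$ from that refinement bound.

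For the main step, I would give a geometric description of the sets $U_{\neg m}[y]$ inside an arbitrary explaining tree $(T,\lambda)$. Walking upward from $y$ along the path $y=v_0\prec_T v_1\prec_T\cdots\prec_T v_k=\rho$, let $i$ be the smallest index with $m\in\lambda(v_i v_{i-1})$, if such an index exists, and $i=k+1$ otherwise. For any leaf $x\neq y$ one has $\lca_T(x,y)=v_j$ for a unique $j\ge 1$, and the path from $\lca_T(x,y)$ down to $y$ traverses exactly the edges $v_{j}v_{j-1},\dots,v_1 v_0$; hence $m\in\varepsilon(x,y)$ iff $j\ge i$, which is equivalent to $x\notin L(T(v_{i-1}))$. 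This yields $U_{\neg m}[y]=L(T(v_{i-1}))\in\HH(T)$ in the nontrivial case, and $U_{\neg m}[y]=L$ otherwise. Together with $L\in\HH(T)$ and $\{x\}\in\HH(T)$, this gives $\HH(T_\varepsilon)\subseteq\HH(T)$. For the label condition, assume $L(T(v))=L(T_\varepsilon(v'))$ and $m\in\lambda_\varepsilon(\parent(v'),v')$. By the definition of $\lambda_\varepsilon$ there is some $y$ with $L(T(v))=U_{\neg m}[y]$, and the path analysis above forces $\parent(v)v$ to be the first $m$-bearing edge above $y$ in $T$, so $m\in\lambda(\parent(v)v)$.

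Once $(T_\varepsilon,\lambda_\varepsilon)\le(T,\lambda)$ is in hand, the remaining assertions are essentially formal. If $(T,\lambda)$ is a least-resolved explanation then, since $(T_\varepsilon,\lambda_\varepsilon)$ also explains $\varepsilon$ and lies below $(T,\lambda)$, any strict inequality would contradict ``least-resolved'', so $(T,\lambda)=(T_\varepsilon,\lambda_\varepsilon)$; and $(T_\varepsilon,\lambda_\varepsilon)$ itself is least-resolved because any strict simplification $(T',\lambda')<(T_\varepsilon,\lambda_\varepsilon)$ still explaining $\varepsilon$ would, by the refinement bound applied to $(T',\lambda')$, also satisfy $(T_\varepsilon,\lambda_\varepsilon)\le(T',\lambda')$, a contradiction. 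For the minimization of $\ell_{\min}$, each edge of $T_\varepsilon$ corresponds to a cluster also present in $T$ whose $\lambda$-label contains the $\lambda_\varepsilon$-label of the matching edge; summing sizes then gives $\sum_{e\in E(T_\varepsilon)}|\lambda_\varepsilon(e)|\le\sum_{e\in E(T)}|\lambda(e)|$.

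The main obstacle is executing the first step cleanly and, in particular, verifying the ``first $m$-bearing edge'' characterization in both directions; this requires a careful path analysis ensuring that the presence or absence of $m$ on the path from $\lca_T(x,y)$ to $y$ is controlled solely by whether that path crosses the critical edge $v_i v_{i-1}$. Once this geometric picture is firmly in place, the rest of the proof amounts to straightforward bookkeeping.
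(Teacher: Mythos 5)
The paper does not prove Proposition~\ref{prop:coarse} at all: it is imported verbatim from \cite[Prop.~4.3, Thm.~4.4]{Hellmuth:20c}, so there is no in-paper argument to compare against. Judged on its own, your reconstruction is essentially correct and follows the same route as the cited source: the identification $U_{\neg m}[y]=L(T(v_{i-1}))$, where $v_iv_{i-1}$ is the lowest $m$-bearing edge above $y$ (and $U_{\neg m}[y]=L$ if none exists), is exactly the right key observation; it immediately gives $\HH(T_{\varepsilon})\subseteq\HH(T)$, forces $v=v_{i-1}$ whenever $L(T(v))=U_{\neg m}[y]$ (so the label containment holds), and the uniqueness and $\ell_{\min}$-minimality statements then follow formally as you describe.

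The one step you undersell is the claim that $(T_{\varepsilon},\lambda_{\varepsilon})$ itself explains $\varepsilon$. This is not a content-free ``direct check'': it is precisely where the two defining conditions of Fitch maps enter. Concretely, to show that $k\in\lambda_{\varepsilon}(\parent(v),v)$ for an edge on the path from $\lca(x,y)$ to $y$ implies $k\in\varepsilon(x,y)$, you pick $z$ with $L(T_{\varepsilon}(v))=U_{\neg k}[z]$, note $y\in U_{\neg k}[z]$, and then need $U_{\neg k}[y]\subseteq U_{\neg k}[z]$; this uses both the hierarchy-likeness of $\mathcal{N}_{\varepsilon}$ (to get comparability) and the cardinality condition $|U_{\neg k}[y]|\le|U_{\neg k}[z]|$ for $y\in U_{\neg k}[z]$ (to get the inclusion in the right direction). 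Without invoking these two axioms the verification does not go through, so this step deserves to be written out rather than waved at. With that addition, the argument is complete.
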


\begin{lemma}\label{lem:fitchclu}
  Let $\varepsilon:\LL\to 2^N$ be a Fitch map with least-resolved tree
  $(T_{\varepsilon},\lambda_{\varepsilon})$.  Then there exists an edge
  labeling $\lambda:E(T)\to 2^N$ such that $(T,\lambda)$ explains
  $\varepsilon$ if and only if $T$ is a refinement of $T_{\varepsilon}$.
\end{lemma}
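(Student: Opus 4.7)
The ``only if'' direction is immediate from Proposition~\ref{prop:coarse}: whenever $(T,\lambda)$ explains $\varepsilon$ we have $(T_\varepsilon,\lambda_\varepsilon)\le(T,\lambda)$, so $\HH(T_\varepsilon)\subseteq\HH(T)$, which is by definition the statement that $T$ refines $T_\varepsilon$.

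For the converse, suppose $T$ refines $T_\varepsilon$. The plan is to construct $\lambda\colon E(T)\to 2^N$ explicitly by inheriting labels from $T_\varepsilon$. For each edge $\parent_T(v)v\in E(T)$, set $\lambda(\parent_T(v),v)\coloneqq\lambda_\varepsilon(\parent_{T_\varepsilon}(v'),v')$ if there is a (necessarily unique) $v'\in V(T_\varepsilon)$ with $L(T_\varepsilon(v'))=L(T(v))$, and $\lambda(\parent_T(v),v)\coloneqq\emptyset$ otherwise. In words, the inner clusters newly created by refining carry the empty label, while all others inherit the label of the corresponding $T_\varepsilon$-edge.

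To verify that $(T,\lambda)$ explains $\varepsilon$, I would fix distinct $x,y\in L$, set $u\coloneqq\lca_T(x,y)$ and $u^*\coloneqq\lca_{T_\varepsilon}(x,y)$, and let $y=y_0,\ldots,y_k=u$ and $y=z_0,\ldots,z_\ell=u^*$ denote the paths from $y$ in $T$ and $T_\varepsilon$. Since $\HH(T_\varepsilon)\subseteq\HH(T)$, each $z_j$ lifts to a unique $w_j\in V(T)$ with $L(T(w_j))=L(T_\varepsilon(z_j))$. The key claim is
\[
\{w_0,\ldots,w_{\ell-1}\}\;=\;\{\,y_i : 0\le i\le k-1 \text{ and } L(T(y_i))\in\HH(T_\varepsilon)\,\}.
\]
For ``$\subseteq$'': if $j<\ell$, then $z_j\prec_{T_\varepsilon}u^*$ forces $x\notin L(T_\varepsilon(z_j))=L(T(w_j))$, whereas $x\in L(T(u))$; since both clusters lie in the hierarchy $\HH(T)$ and share $y$, the hierarchy property yields $L(T(w_j))\subsetneq L(T(u))$, i.e., $w_j\prec_T u$. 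For ``$\supseteq$'': any $y_i$ with $i<k$ satisfies $x\notin L(T(y_i))$ (otherwise $y_i$ would be a common ancestor of $x,y$ strictly below $u$, contradicting $u=\lca_T(x,y)$), and together with $L(T(y_i))\subseteq L(T(u))\subseteq L(T_\varepsilon(u^*))$---the rightmost inclusion because $L(T_\varepsilon(u^*))$ is itself a $T$-cluster containing $\{x,y\}$ and $L(T(u))$ is the minimal such---this forces any $z\in V(T_\varepsilon)$ with $L(T_\varepsilon(z))=L(T(y_i))$ to satisfy $z\prec_{T_\varepsilon}u^*$, hence $z=z_j$ for some $j<\ell$. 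With this bijection, the definition of $\lambda$ gives $\bigcup_{i=1}^k\lambda(y_i,y_{i-1})=\bigcup_{j=0}^{\ell-1}\lambda_\varepsilon(z_{j+1},z_j)=\varepsilon(x,y)$, the last equality holding because $(T_\varepsilon,\lambda_\varepsilon)$ explains $\varepsilon$.

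The main obstacle is the bijection above. Its subtlety is that $u$ and $u^*$ need not correspond under the refinement: $L(T(u))$ may be strictly contained in $L(T_\varepsilon(u^*))$, so the $T$-path from $y$ to $u$ can be genuinely shorter than the $T_\varepsilon$-path from $y$ to $u^*$. One must therefore verify both that the ``extra'' top vertex $u^*$ (and its lift $w_\ell$) sits strictly above $u$ in $T$ so the missing edges are irrelevant to $\varepsilon(x,y)$, and that $T$-vertices whose clusters are not in $\HH(T_\varepsilon)$ do not spuriously contribute labels---the latter is handled by the ``otherwise'' branch of the construction.
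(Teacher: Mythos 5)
Your proof is correct and takes essentially the same route as the paper: the ``only if'' direction via Proposition~\ref{prop:coarse}, and the converse via exactly the labeling of Eq.~\eqref{eq:lambda}, inherited from $(T_{\varepsilon},\lambda_{\varepsilon})$ with $\emptyset$ on the newly created edges. The only difference is organizational: you verify the equality of the two Fitch maps through a global bijection between the relevant path vertices, while the paper checks each label $k$ via a single corresponding edge --- both arguments rest on the same observation that $y\in L(T(w))$ and $x\notin L(T(w))$ characterizes the vertices $w$ below an edge on the path from $\lca(x,y)$ to $y$.
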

\begin{proof}
  Suppose $(T,\lambda)$ explains $\varepsilon$. By Prop.~\ref{prop:coarse},
  this implies $(T_{\varepsilon},\lambda_{\varepsilon})\le (T,\lambda)$,
  i.e., $T$ is a refinement of $T_{\varepsilon}$.  Conversely, let
  $\varepsilon$ be a Fitch map with least-resolved tree
  $(T_{\varepsilon},\lambda_{\varepsilon})$ and let $T$ be a refinement of
  $T_{\varepsilon}$. Define, for all edges $\{\parent_{T}(v),v\}\in E(T)$,
  the edge labeling
  \begin{equation}
    \label{eq:lambda}
    \lambda(\{\parent_{T}(v),v\}) \coloneqq
    \begin{cases} 
      \lambda_{\varepsilon}(\parent_{T_{\varepsilon}}(v'),v')
      & \text{ if } L(T(v))=L(T_{\varepsilon}(v')), \\
      \emptyset & \text{ otherwise.}
    \end{cases}
  \end{equation}
  The map $\lambda$ is well-defined, since there is at most one
  $v'\in V(T_{\varepsilon})$ with $L(T(v))=L(T_{\varepsilon}(v'))$.
  
  \textit{Claim.} $(T,\lambda)$ and
  $(T_{\varepsilon},\lambda_{\varepsilon})$
  explain the same Fitch map $\varepsilon$.\\
  By assumption, $(T_{\varepsilon},\lambda_{\varepsilon})$ explains
  $\varepsilon$.  Let $(a,b)\in\LL$, $k\in N$, and let $\varepsilon'$ be
  the Fitch map explained by $(T,\lambda)$.  First, suppose
  $k\in \varepsilon(a,b)$, i.e., there is an edge
  $e'=\{\parent_{T_{\varepsilon}}(w'),w'\}$ with
  $k\in\lambda_{\varepsilon}(e')$ such that
  $w'\prec_{T_{\varepsilon}}\lca_{T_{\varepsilon}}(a,b)$ by the definition
  of Fitch maps.  We have $a\notin L(T_{\varepsilon}(w'))$.  Since $T$ is a
  refinement of $T_{\varepsilon}$, there is a vertex $w\in V(T)$ with
  $L(T(w))=L(T_{\varepsilon}(w'))$.  In particular, therefore,
  $\lambda(\{\parent_{T}(w),w\})=\lambda_{\varepsilon}(e')$.  This together
  with the fact that $a\notin L(T_{\varepsilon}(w'))=L(T(w))$ immediately
  implies $k\in \varepsilon'(a,b)$.  Now suppose $k\in \varepsilon'(a,b)$.
  Hence, there is an edge $e=\{\parent_{T}(v),v\}$ with
  $v\prec_{T}\lca_{T}(a,b)$ and $k\in \lambda(e)$.  By construction of
  $\lambda$, the latter implies that there is a vertex
  $v'\in V(T_{\varepsilon})$ with $L(T(v))=L(T_{\varepsilon}(v'))$ and, in
  particular,
  $k\in\lambda_{\varepsilon}(\parent_{T_{\varepsilon}}(v'),v')$.  The
  latter together with $a\notin L(T(v))=L(T_{\varepsilon}(v'))$ implies
  that $k\in \varepsilon(a,b)$.  Since $(a,b)\in\LL$ and $k\in N$ were
  chosen arbitrarily, we conclude that $\varepsilon=\varepsilon'$, and
  thus, $(T,\lambda)$ also explains $\varepsilon$.  
\end{proof}
  
The labeling $\lambda$ defined in Eq.(\ref{eq:lambda}) satisfies
$\ell_{\min} = \sum_{e\in T(e)} |\lambda(e)|$ by construction and
Prop.~\ref{prop:coarse}. Furthermore, we observe that $(T^*,\lambda^*)$ is
obtained from $(T,\lambda)$ by contracting only edges with
$\lambda(e)=\emptyset$. More precisely, $e$ is contracted if and only if
$e$ is an inner edge with $\lambda(e)=\emptyset$. This implies
\begin{corollary}
  \label{cor:fitchclu}
  Suppose $(T,\lambda')$ explains the Fitch map $\varepsilon$. Then
  $\lambda: E(T)\to 2^N$ given by Eq.~\eqref{eq:lambda} is the unique
  labeling such that $(T,\lambda)$ explains $\varepsilon$ and
  $\sum_{e\in E(T)}|\lambda(e)|=\ell_{\min}$.
\end{corollary}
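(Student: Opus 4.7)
The plan is to combine Lemma~\ref{lem:fitchclu}, which already constructs a candidate $\lambda$ via Eq.~\eqref{eq:lambda} and shows that $(T,\lambda)$ explains $\varepsilon$, with Proposition~\ref{prop:coarse}, which forces every explaining labeling on $T$ to contain the labels of $(T_{\varepsilon},\lambda_{\varepsilon})$ edge-wise. The key bookkeeping device is the injection induced by the refinement $(T_{\varepsilon},\lambda_{\varepsilon})\le(T,\lambda')$: for every non-root $v'\in V(T_{\varepsilon})$ there is a unique $v\in V(T)$ with $L(T(v))=L(T_{\varepsilon}(v'))$, and this correspondence $v'\mapsto v$ lifts to a natural injection of $E(T_{\varepsilon})$ into $E(T)$ that sends the parent edge of $v'$ to the parent edge of $v$.

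To see that $\sum_{e\in E(T)}|\lambda(e)|=\ell_{\min}$, I would simply expand the sum for the $\lambda$ of Eq.~\eqref{eq:lambda}. By construction $\lambda(e)$ is nonempty only when $e$ lies in the image of the above injection, in which case $\lambda(e)=\lambda_{\varepsilon}(e')$ for the corresponding edge $e'\in E(T_{\varepsilon})$. Hence $\sum_{e\in E(T)}|\lambda(e)|=\sum_{e'\in E(T_{\varepsilon})}|\lambda_{\varepsilon}(e')|=\ell_{\min}$ by the definition of $\ell_{\min}$ in Proposition~\ref{prop:coarse}.

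For uniqueness, suppose $\lambda''$ is any labeling of $E(T)$ such that $(T,\lambda'')$ explains $\varepsilon$ and $\sum_{e\in E(T)}|\lambda''(e)|=\ell_{\min}$. By Proposition~\ref{prop:coarse}, $(T_{\varepsilon},\lambda_{\varepsilon})\le(T,\lambda'')$, so for every corresponding pair $(v',v)$ one has $\lambda_{\varepsilon}(\parent_{T_{\varepsilon}}(v'),v')\subseteq\lambda''(\parent_{T}(v),v)$. Summing over edges yields $\sum_{e}|\lambda''(e)|\ge\sum_{e'}|\lambda_{\varepsilon}(e')|=\ell_{\min}$, with equality precisely when each containment is an equality and every edge of $T$ outside the image of the injection satisfies $\lambda''(e)=\emptyset$. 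This is exactly the specification of Eq.~\eqref{eq:lambda}, so $\lambda''=\lambda$. I expect no genuine obstacle beyond care in checking that the cluster correspondence $v\leftrightarrow v'$ is a well-defined injection whose image consists exactly of the edges of $T$ corresponding to edges of $T_{\varepsilon}$, which ensures that no label is double-counted and that the inequality is tight only for the labeling given by Eq.~\eqref{eq:lambda}.
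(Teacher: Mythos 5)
Your proposal is correct and follows essentially the same route as the paper: it invokes Lemma~\ref{lem:fitchclu} for the existence part, the remark preceding the corollary (that $\lambda$ from Eq.~\eqref{eq:lambda} attains $\ell_{\min}$), and Proposition~\ref{prop:coarse} to get the edge-wise containments $\lambda_{\varepsilon}(e')\subseteq\lambda''(e)$, from which minimality forces equality. You merely spell out the counting argument (the edge injection and the tightness of the summed inequality) that the paper compresses into ``minimality of $\lambda''$ implies $\lambda''=\lambda$.''
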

\begin{proof}
  Suppose $(T,\lambda'')$ explains $\varepsilon$ and
  $\sum_{e\in E(T)}|\lambda''(e)|=\ell_{\min}$. By Prop.~\ref{prop:coarse},
  we have $(T_{\varepsilon},\lambda_{\varepsilon})\le (T,\lambda'')$ and
  thus
  $\lambda_{\varepsilon}(\parent_{T_{\varepsilon}}(v'),v')\subseteq
  \lambda''(\parent_{T}(v),v)$ if $L(T_{\varepsilon}(v'))=L(T(v))$.  Since,
  moreover,
  $\lambda_{\varepsilon}(\parent_{T_{\varepsilon}}(v'),v')=
  \lambda(\parent_{T}(v),v)$ if $L(T_{\varepsilon}(v'))=L(T(v))$ by
  Eq.(\ref{eq:lambda}), minimality of $\lambda''$ implies
  $\lambda''=\lambda$. 
\end{proof}

\section{Tree-like Pairs of Maps}

Symbolic ultrametrics and Fitch maps on $\LL$ derive from trees in very
different ways by implicitly leveraging information about inner vertices
and edges of the \textit{a priori} unknown tree. It is of interest,
therefore, to know when they are consistent in the sense that they can be
simultaneously explained by a tree.
\begin{definition}
  An ordered pair $(\delta,\varepsilon)$ of maps $\delta:\LL\to M$ and
  $\varepsilon:\LL\to 2^N$ is \emph{tree-like} if there is a tree $T$
  endowed with a vertex labeling $t:V^0(T)\to M$ and edge labeling
  $\lambda:\LL\to 2^N$ such that $(T,t)$ explains $\delta$ and
  $(T,\lambda)$ explains $\varepsilon$.
\end{definition}

Naturally, we ask when $(\delta,\varepsilon)$ is explained by a vertex and
edge labeled tree $(T,t,\lambda)$, i.e., when $(\delta,\varepsilon)$ is a
tree-like pair of maps on $\LL$. Furthermore, we ask whether a tree-like
pair of maps is again explained by a unique least-resolved tree
$(T^*,t^*,\lambda^*)$. 

\begin{theorem}
  \label{thm:OOXX-tree}
  Let $\delta:\LL\to M$ and $\varepsilon:\LL\to 2^N$. Then
  $(\delta,\varepsilon)$ is tree-like if and only if 
  \begin{enumerate}
  \item $\delta$ is a symbolic ultrametric. 
  \item $\varepsilon$ is a Fitch map.
  \item $\HH^*\coloneqq \HH(T_{\delta})\cup   \HH(T_{\varepsilon})$ is a
    hierarchy.
  \end{enumerate}
  In this case, there is a unique least-resolved vertex and edge labeled
  tree $(T^*,t^*,\lambda^*)$ explaining $(\delta,\varepsilon)$. The tree
  $T^*$ is determined by $\HH(T^*)=\HH^*$, the vertex labeling $t^*$ is
  uniquely determined by $t_{\delta}$ and the edge labeling $\lambda^*$
  with minimum value of $\sum_{e\in E(T^*)} |\lambda^*(e)|$ is uniquely
  determined by $\lambda_{\varepsilon}$.
\end{theorem}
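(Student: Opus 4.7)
My plan is to reduce both directions to the two preceding ``refinement'' lemmas, using that the tree $T^*$ corresponding to the hierarchy $\HH^*$ is the coarsest common refinement of $T_\delta$ and $T_\varepsilon$.

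For the forward direction, suppose $(T,t,\lambda)$ explains $(\delta,\varepsilon)$. Then $(T,t)$ explains $\delta$ and $(T,\lambda)$ explains $\varepsilon$, so by B{\"o}cker--Dress $\delta$ is a symbolic ultrametric and by \cite{Hellmuth:20c} $\varepsilon$ is a Fitch map, giving conditions (1) and (2). Lemma~\ref{lem:orthoclu} then forces $T$ to be a refinement of $T_{\delta}$, and Lemma~\ref{lem:fitchclu} forces $T$ to be a refinement of $T_{\varepsilon}$; in particular $\HH(T_{\delta})\cup\HH(T_{\varepsilon})\subseteq\HH(T)$. Since $\HH(T)$ is a hierarchy and both $\HH(T_\delta)$ and $\HH(T_\varepsilon)$ contain $L$ and all singletons (as both trees are phylogenetic on $L$), the laminar condition $A\cap B\in\{A,B,\emptyset\}$ is inherited by $\HH^*$, yielding (3).

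For the converse, define $T^*$ as the tree associated with the hierarchy $\HH^*$ via the bijection with phylogenetic trees; this is well-defined because $\HH^*$ is a hierarchy by (3). By construction $\HH(T_\delta)\subseteq\HH(T^*)$ and $\HH(T_\varepsilon)\subseteq\HH(T^*)$, so $T^*$ is a common refinement of $T_\delta$ and $T_\varepsilon$. Applying Lemma~\ref{lem:orthoclu} provides a (unique) vertex labeling $t^*:V^0(T^*)\to M$ such that $(T^*,t^*)$ explains $\delta$; applying Lemma~\ref{lem:fitchclu} provides an edge labeling $\lambda^*:E(T^*)\to 2^N$ such that $(T^*,\lambda^*)$ explains $\varepsilon$. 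Hence $(T^*,t^*,\lambda^*)$ witnesses that $(\delta,\varepsilon)$ is tree-like.

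For uniqueness of the least-resolved tree, I observe that every tree $T$ explaining $(\delta,\varepsilon)$ satisfies $\HH^*\subseteq\HH(T)$ by the argument above; the least-resolved choice is therefore the one with $\HH(T)=\HH^*$, which is precisely $T^*$ and is unique by the hierarchy/tree bijection. Uniqueness of $t^*$ is the final sentence of the proof of Lemma~\ref{lem:orthoclu}, because every inner vertex of the phylogenetic tree $T^*$ is the last common ancestor of some leaf pair. Uniqueness of $\lambda^*$ under the minimality constraint $\sum_e|\lambda^*(e)|=\ell_{\min}$ is exactly Corollary~\ref{cor:fitchclu} applied to $T^*$.

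Since every step invokes an already-proven tool, the only non-routine point is verifying that $\HH^*\subseteq\HH(T)$ genuinely promotes $\HH^*$ to a hierarchy rather than just a laminar family; the main obstacle---showing that the two seemingly independent tree constraints admit a simultaneous refinement---is packaged cleanly into the single condition that $\HH(T_\delta)\cup\HH(T_\varepsilon)$ already be laminar, after which the tree $T^*$ and its labelings essentially build themselves from the two least-resolved trees.
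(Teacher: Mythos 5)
Your proof is correct and follows essentially the same route as the paper: both directions reduce to Lemmas~\ref{lem:orthoclu} and~\ref{lem:fitchclu} via the refinement characterization, with $T^*$ built from the hierarchy $\HH^*$ and the labelings' uniqueness delegated to Lemma~\ref{lem:orthoclu} and Corollary~\ref{cor:fitchclu}. The only (harmless) variation is that you establish least-resolvedness globally, from $\HH^*\subseteq\HH(T)$ for every explaining tree, whereas the paper argues it locally by showing that contracting any single edge of $T^*$ destroys the refinement of $T_\delta$ or $T_\varepsilon$.
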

\begin{proof}
  Suppose $(\delta,\varepsilon)$ is tree-like, i.e., there is a tree
  $(T,t,\lambda)$ such that $(T,t)$ explains $\delta$ and $(T,\lambda)$
  explains $\varepsilon$. Thus $\delta$ is a symbolic ultrametric and
  $\varepsilon$ is a Fitch map. Furthermore, $T$ is a refinement of 
  least-resolved trees $T_{\delta}$ and $T_{\varepsilon}$ because of the
  uniqueness of these least-resolved trees, and we have
  $\HH(T_{\delta})\subseteq \HH(T)$ and $\HH(T_{\varepsilon})\subseteq \HH(T)$
  and thus $\HH^*\subseteq \HH(T)$. Since $\HH(T)$ is a hierarchy and the
  subset $\HH^*$ contains both $L$ and all singletons $\{x\}$ with $x\in L$,
  $\HH^*$ is a hierarchy. 

  Conversely, suppose conditions (1), (2), and (3) are satisfied. The first
  two conditions guarantee the existence of the least-resolved tree
  $(T_{\delta},t_{\delta})$ and $(T_{\varepsilon},\lambda_{\varepsilon})$
  explaining $\delta$ and $\varepsilon$, respectively. Thus
  $\HH^*=\HH(T_{\delta})\cup\HH(T_{\varepsilon})$ is
  well-defined. Condition (3) stipulates that $\HH^*$ is a hierarchy and
  thus there is a unique tree $T^*$ such that $\HH(T^*)=H^*$, which by
  construction is a refinement of both $T_{\delta}$ and
  $T_{\varepsilon}$. By Lemmas~\ref{lem:orthoclu} and~\ref{lem:fitchclu},
  $T^*$ can be equipped with a vertex-labeling $t^*$ and an edge-labeling
  $\lambda^*$ such that $(T^*,t^*)$ explains $\delta$ and $(T^*,\lambda^*)$
  explains $\varepsilon$, respectively. Thus $(\delta,\varepsilon)$ is
  tree-like.

  We now show that $(T^*,t^*,\lambda^*)$ is least-resolved w.r.t.\
  $(\delta,\varepsilon)$ and thus that for every $e\in E(T^*)$, the tree
  $T'\coloneqq T^*/e$ does not admit a vertex labeling $t':V^0(T')\to M$
  and an edge-labeling $\lambda':E(T')\to 2^N$ such that $(T',t',\lambda')$
  explains $(\delta,\varepsilon)$.  Let $e= \{\parent(v),v\}\in E(T^*)$.
  Hence, $L(T^*(v)) \in \HH(T^*)$.  If $v\in L(T^*)$, then we have
  $L(T^*)\ne L(T')$ and the claim trivially holds. Thus suppose that
  $v\in V^0(T)$ in the following.  Since the edge $e$ is contracted in
  $T'$, we have $\HH(T') = \HH(T^*)\setminus \{L(T(v))\}$ and thus,
  $\HH(T_\delta)\not\subseteq \HH(T')$ or
  $\HH(T_{\varepsilon})\not\subseteq \HH(T')$. Thus $T'$ is not a
  refinement of $T_{\delta}$ or $T_{\varepsilon}$.  By
  Lemma~\ref{lem:orthoclu} and~\ref{lem:fitchclu}, respectively, this
  implies that there is no $t'$ such that $(T',t')$ explains $\delta$ or no
  $\lambda'$ such that $(T',\lambda')$ explains $\varepsilon$,
  respectively.  Thus $(T^*,t^*,\lambda^*)$ is least-resolved w.r.t.\
  $(\delta,\varepsilon)$.

  It remains to show that $(T^*,t^*,\lambda^*)$ is unique.  Since $T^*$ is
  uniquely determined by $\HH^*$, it suffices to show that the labeling of
  $T^*$ is unique. This, however, follows immediately from
  Lemma~\ref{lem:orthoclu} and Cor.~\ref{cor:fitchclu}, respectively.  
\end{proof}

We note that every refinement $T$ of the least-resolved tree
$(T^*,t^*,\lambda^*)$ admits a vertex labeling $t:V^0(T)\to M$ and an edge
labeling $\lambda: E(T)\to 2^N$ such that $(T,t,\lambda)$ explains
$(\delta,\varepsilon)$. 

\begin{theorem}
  \label{thm:perf}
  Given two maps $\delta:\LL\to M$ and $\varepsilon:\LL\to 2^N$ it can be
  decided in $O(|L|^2 |N|)$ whether $(\delta,\varepsilon)$ is tree-like. In
  the positive case, the unique least-resolved tree $(T^*,t^*,\lambda^*)$
  can be obtained with the same effort.
\end{theorem}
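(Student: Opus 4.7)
The plan is to use Theorem~\ref{thm:OOXX-tree} as a blueprint: check whether $\delta$ is a symbolic ultrametric, whether $\varepsilon$ is a Fitch map, and whether $\HH^*=\HH(T_{\delta})\cup\HH(T_{\varepsilon})$ is a hierarchy. Each of the three tests produces, as a by-product, the data needed to assemble $(T^*,t^*,\lambda^*)$ in the positive case, so it suffices to argue that the three tests plus the final assembly fit within $O(|L|^2|N|)$ time.

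First, I would run the B{\"o}cker--Dress recognition procedure on $\delta$ to decide whether it is a symbolic ultrametric and, if so, to compute the discriminating representation $(T_{\delta},t_{\delta})$. This runs in $O(|L|^2)$, proportional to the size of the input $\delta$, by the algorithms of~\cite{Boecker:98,Hellmuth:13a}. Next, I would compute the sets $U_{\neg m}[y]$ for every $y\in L$ and every $m\in N$ by a single scan of $\varepsilon$, check the hierarchy-like and size conditions from~\cite{Hellmuth:20c}, and, if both hold, assemble $(T_{\varepsilon},\lambda_{\varepsilon})$. Since there are $|L||N|$ sets, each of size at most $|L|$, the bookkeeping fits in $O(|L|^2|N|)$; after deduplication the essentially distinct clusters form a hierarchy on $L$ and therefore number only $O(|L|)$.

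To test the third condition of Theorem~\ref{thm:OOXX-tree}, I would exploit the fact that both $\HH(T_{\delta})$ and $\HH(T_{\varepsilon})$ have at most $2|L|-1$ clusters, so $|\HH^*|=O(|L|)$. I would walk through the clusters of $\HH(T_{\varepsilon})$ in increasing order of cardinality and insert each into $T_{\delta}$: for a cluster $B$, locate the inclusion-minimal $A\in\HH(T_{\delta})$ with $B\subseteq A$ (via an $\lca$-query on $T_{\delta}$ applied to any two leaves of $B$) and verify that $B$ is laminar with respect to the children of $A$ already installed. A violation means $\HH^*$ is not a hierarchy; otherwise the resulting Hasse diagram is $T^*$, whose labelings are then read off from $t_{\delta}$ by Lemma~\ref{lem:orthoclu} and from Eq.~(\ref{eq:lambda}) by Corollary~\ref{cor:fitchclu}. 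Each cluster is processed in $O(|L|)$ time, and the labelings are computed in $O(|L|^2|N|)$ overall.

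I expect the delicate step to be this merging stage: testing hierarchy compatibility within $O(|L|^2)$ rules out a naive all-pairs check of $O(|L|^2)$ cluster pairs combined with $O(|L|)$-time intersections. The trick is to process incoming clusters in size order and use $\lca$-queries (or a compact interval representation based on a fixed leaf ordering) so that each compatibility test runs in time linear in the size of the cluster currently being inserted, amortizing to the claimed bound. All other parts—symbolic-ultrametric recognition, Fitch-map recognition, and the label transfer—are already known to run in $O(|L|^2|N|)$ or better and need only to be reassembled rather than redesigned.
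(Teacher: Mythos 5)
Your proposal is correct and follows essentially the same route as the paper: recognize $\delta$ as a symbolic ultrametric in $O(|L|^2)$, recognize $\varepsilon$ as a Fitch map and build $(T_{\varepsilon},\lambda_{\varepsilon})$ in $O(|L|^2|N|)$, test whether $\HH(T_{\delta})\cup\HH(T_{\varepsilon})$ is a hierarchy by constructing the common refinement, and transfer the labels via Lemma~\ref{lem:orthoclu} and Eq.~\eqref{eq:lambda}. The only divergence is the merging step: the paper invokes the linear-time algorithm \texttt{LinCR} of \cite{Schaller:21v} as a black box, whereas you sketch an ad hoc $O(|L|^2)$ insertion procedure; your sketch is plausible (both hierarchies have only $O(|L|)$ clusters, and laminarity of a cluster $B$ against a tree reduces to checking that $B$ is a union of child subtrees of $\lca(B)$), it still fits the claimed bound, but it leaves the amortization details informal where the paper can simply rely on a citation.
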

\begin{proof} 
  Based on Theorem \ref{thm:OOXX-tree}, a possible algorithm consists of
  three steps: (i) check whether $\delta$ is a symbolic ultrametric, (ii)
  check whether $\varepsilon$ is a Fitch map and, if both statements are
  true, (iii) compute
  $\HH^*\coloneqq \HH(T_{\delta})\cup \HH(T_{\varepsilon})$ and use this
  information to compute the unique least-resolved vertex and edge labeled
  tree $(T^*,t^*,\lambda^*)$.  By \cite[Thm.\ 6.2]{Hellmuth:20c}, the
  decision whether $\varepsilon$ is a Fitch map and, in the positive case,
  the construction of the least-resolved tree
  $(T_{\varepsilon},\lambda_{\varepsilon})$ can be achieved in
  $O(|L|^2 |N|)$ time.  Moreover, it can be verified in $O(|L|^2)$ whether
  or not a given map $\delta$ is a symbolic ultrametric, and, in the
  positive case, the discriminating tree $(T_{\delta},t_{\delta})$ can be
  computed within the same time complexity (cf.\ \cite[Thm.~7]{HSW:17}).
  The common refinement $T$ with
    $\HH(T)=\HH(T_{\delta})\cup\HH(T_{\varepsilon})$ can be computed in
    $O(|L|)$ time using \texttt{LinCR} \cite{Schaller:21v}.
    
  The edge labels $\lambda^*$ are then carried over from
  $(T_{\varepsilon},\lambda_{\varepsilon})$ using the correspondence
  between $u^*v^*\in E(T^*)$ and $uv\in E(T_{\varepsilon})$ iff
  $L(T^*(v^*))=L(T_\varepsilon(v))$, otherwise
  $\lambda^*(\{u^*,v^*\})=\emptyset$. This requires $O(|L|\cdot |N|)$
  operations.  The vertex labels can then be assigned by computing, for all
  $(x,y)\in\LL$, the vertex $v=\lca_{T^*}(x,y)$ and assigning
  $t^*(v)=\delta(x,y)$ in quadratic time using a fast last common ancestor
  algorithm \cite{Harel:84}. Thus we arrive at a total performance bounds
  of $O(|L|^2|N|))$.  
\end{proof}

\section{Tree-like Pairs of Maps with Constraints}

One interpretation of tree-like pairs of maps $(\delta,\varepsilon)$ is to
consider $\delta$ as the orthology relation and $\varepsilon$ as the
xenology relation. In such a setting, certain vertex labels $t(v)$ preclude
some edge labels $\lambda(\{v,u\})$ with $u\prec v$. For example, a
speciation vertex cannot be the source of a horizontal transfer edge.  We
use the conventional notations $t(u)=\SPEC$ and $t(v)=\DUPL$ for speciation
and duplication vertices \cite{Geiss:20b}, respectively, set $t(u)=\OTHER$
for a third vertex type, and consider the monochromatic Fitch map
$\varepsilon\colon\LL\to\{\emptyset,\EHGT\}$.  Thus, we require that
$\lambda(\{v,u\})=\EHGT$ and $u\prec_T v$ implies $t(v)=\OTHER$
\cite{Nojgaard:18a,Tofigh:11,Bansal:12}.  This condition simply states that
neither a speciation nor a gene duplication is the source of a horizontal
transfer.

In \cite{Nojgaard:18a}, we considered evolutionary scenarios that satisfy
another rather stringent observability condition:
\begin{description}
  \item[\AX{(C)}] For every $v\in V^0(T)$, there is a child $u\in\child(v)$
  such that $\lambda(\{v,u\})=\emptyset$.
\end{description}
We call a Fitch map $\lambda$ that satisfies \AX{(C)} a \emph{type-C Fitch
  map}.  In this case, for every $v\in V^0(T)$, there is a leaf
$x\in L(T(v))$ such that $\lambda(e)=\emptyset$ for all edges along the
path from $v$ to $x$.  As an immediate consequence of \AX{(C)}, we observe
that, given $|L|\ge 2$, for every $x\in L$ there is a $y\ne x$ such that
$\varepsilon(x,y)=\emptyset$. This condition is not sufficient, however, as
the following example shows. Consider the tree $((x,y),(x',y'))$ in Newick
notation, with the edges in the two cherries $(x,y)$ and $(x',y')$ being
labeled with $\emptyset$, and two $\EHGT$-labeled edges incident to the
root. Then, for every $z\in L$, we have $\varepsilon(z,z')=\emptyset$,
where $z'$ is the sibling of $z$, but condition \AX{(C)} is not satisfied.
In a somewhat more general setting, we formalize these two types of
labeling constraints as follows:
\begin{definition}
  Let $\delta:\LL\to M$ and $\varepsilon:\LL\to 2^N$ be two maps and
  $M_{\emptyset}\subseteq M$.  Then, $(\delta,\varepsilon)$ is
  $M_{\emptyset}$-tree-like if there is a tree $(T,t,\lambda)$ that explains
  $(\delta,\varepsilon)$ and the labeling maps $t:V^0(T)\to M$ and
  $\lambda:E(T)\to 2^N$ satisfy \AX{(C)} and
  \begin{description}
  \item[\AX{(C1)}] If $t(v)\in M_{\emptyset}$, then $\lambda(\{v,u\})=\emptyset$
    for all $u\in\child(v)$.
  \end{description}
\end{definition}
Hence, $M_{\emptyset}$ puts extra constraints to the vertex and edge labels
on trees that satisfy \AX{(C)} and explain $(\delta,\varepsilon)$. Note, an
$\emptyset$-tree-like ($M_{\emptyset} = \emptyset$) must only satisfy
\AX{(C)} and \AX{(C1)} can be omitted.

\begin{theorem}
  \label{thm:CC}
  Let $\delta:\LL\to M$ and $\varepsilon:\LL\to 2^N$ be two maps and
  $M_{\emptyset}\subseteq M$.  Then, $(\delta,\varepsilon)$ is
  $M_{\emptyset}$-tree-like if and only if $(\delta,\varepsilon)$ is
  tree-like and its least-resolved tree $(T^*,t^*,\lambda^*)$ satisfies
  \AX{(C)} and \AX{(C1)}.  
\end{theorem}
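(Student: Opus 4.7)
The $\Leftarrow$ direction is immediate: if $(\delta,\varepsilon)$ is tree-like and the unique least-resolved tree $(T^*,t^*,\lambda^*)$ from Theorem~\ref{thm:OOXX-tree} satisfies \AX{(C)} and \AX{(C1)}, then $(T^*,t^*,\lambda^*)$ itself witnesses $M_\emptyset$-tree-likeness. For the converse, assume $(T,t,\lambda)$ explains $(\delta,\varepsilon)$ and satisfies \AX{(C)} and \AX{(C1)}. Then $(\delta,\varepsilon)$ is tree-like and $T$ is a refinement of $T^*$, so $\HH^*\subseteq\HH(T)$. The plan is to contract the inner edges of $T$ whose lower endpoint's leaf cluster lies in $\HH(T)\setminus\HH^*$ one at a time, arguing that each contraction preserves the explanation of $(\delta,\varepsilon)$ and the conditions \AX{(C)} and \AX{(C1)}. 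Once all such edges have been contracted the tree equals $T^*$; by the uniqueness of the labelings of $T^*$ (Lemma~\ref{lem:orthoclu} and Cor.~\ref{cor:fitchclu}), the surviving labels must coincide with $t^*$ and $\lambda^*$, and the constraints are inherited.

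Without loss of generality $\lambda$ is the minimum-sum edge labeling on $T$ from Eq.~\eqref{eq:lambda}: replacing $\lambda$ by this labeling only shrinks edge-label sets, so empty labels remain empty and both \AX{(C)} and \AX{(C1)} are preserved. Pick an inner edge $\{v,u\}\in E(T)$ with $u\prec_T v$ and $L(T(u))\in\HH(T)\setminus\HH^*$. Since $\HH(T_\varepsilon)\subseteq\HH^*$, Eq.~\eqref{eq:lambda} gives $\lambda(\{v,u\})=\emptyset$, so contracting $\{v,u\}$ preserves the Fitch map. The main obstacle is to show the contraction also preserves the symbolic ultrametric $\delta$, which boils down to the identity $t(u)=t(v)$. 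Since $\HH(T_\delta)\subseteq\HH^*$, $L(T(u))\notin\HH(T_\delta)$. By Lemma~\ref{lem:technical} applied to $T$ viewed as a refinement of $T_\delta$ and to the edge $\{v,u\}$, $t(v)=t_\delta(\tilde w)$ where $L(T_\delta(\tilde w))$ is the inclusion-minimal cluster in $\HH(T_\delta)$ strictly containing $L(T(u))$; applying the same lemma to an edge $\{u,u'\}$ with $u'$ a child of $u$ in $T$ gives $t(u)=t_\delta(\tilde u)$ with $L(T_\delta(\tilde u))$ inclusion-minimal in $\HH(T_\delta)$ strictly containing $L(T(u'))$. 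Because $\HH(T_\delta)\subseteq\HH(T)$ and both $L(T_\delta(\tilde u))$ and $L(T(u))$ contain $L(T(u'))\neq\emptyset$, the hierarchy $\HH(T)$ makes them comparable; the inclusion $L(T_\delta(\tilde u))\subseteq L(T(u))$ would force $L(T_\delta(\tilde u))=L(T(u))$, because no $T$-cluster lies strictly between the direct child $u'$ and its parent $u$, contradicting $L(T(u))\notin\HH(T_\delta)$. Hence $L(T(u))\subsetneq L(T_\delta(\tilde u))$, and $\tilde u=\tilde w$ follows from the two minimality conditions, giving $t(u)=t(v)$.

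It remains to verify that the contraction preserves \AX{(C)} and \AX{(C1)}. In the resulting tree $T'$ the merged vertex (still called $v$) has as children the former children of $v$ in $T$ other than $u$, together with the children of $u$, with inherited edge labels. For \AX{(C)}: if the empty-labeled child of $v$ guaranteed by \AX{(C)} in $T$ is not $u$, it still witnesses \AX{(C)} in $T'$; otherwise, an empty-labeled child of $u$ supplied by \AX{(C)} on $u$ becomes an empty-labeled child of $v$ in $T'$. For \AX{(C1)}: if $t(v)\in M_\emptyset$, then $t(u)=t(v)\in M_\emptyset$ by the identity above, so \AX{(C1)} applied to both $v$ and $u$ in $T$ forces every edge from the merged $v$ in $T'$ to be empty. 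Iterating exhausts $\HH(T)\setminus\HH^*$ and leaves $T^*$ with labels satisfying \AX{(C)} and \AX{(C1)}.
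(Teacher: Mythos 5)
Your proof is correct and follows essentially the same strategy as the paper: pass to the minimal edge labeling of Eq.~\eqref{eq:lambda}, observe that every edge whose cluster lies outside $\HH^*$ has empty $\lambda$-label and equal $t$-labels at its endpoints, and contract such edges one at a time while checking that the explanation of $(\delta,\varepsilon)$ and conditions \AX{(C)} and \AX{(C1)} are preserved. Your explicit derivation of $t(u)=t(v)$ via Lemma~\ref{lem:technical} fills in a step the paper only sketches, but the overall route is the same.
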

\begin{proof}
  If $(\delta,\varepsilon)$ is tree-like and its least-resolved tree
  $(T^*,t^*,\lambda^*)$ satisfies \AX{(C)} and \AX{(C1)},  then
  $(\delta,\varepsilon)$ is $M_{\emptyset}$-tree-like by definition. For
  the converse, suppose $(\delta,\varepsilon)$ is $M_{\emptyset}$-tree-like and
  let  $(T,t,\lambda)$ be a vertex and edge labeled tree that explains
  $(\delta,\varepsilon)$ and satisfies \AX{(C)} and \AX{(C1)}. 

  Let $\lambda'$ be the edge labeling for $T$ as specified in
  Eq.(\ref{eq:lambda}) where $(T_{\varepsilon},\lambda_{\varepsilon})$ is
  replaced by $(T^*,\lambda^*)$.  By the arguments in the proof of
  Lemma~\ref{lem:fitchclu}, $(T,\lambda')$ still explains $\varepsilon$ and
  hence, $(T,t,\lambda')$ explains $(\delta,\varepsilon)$. Moreover, since
  $\ell_{\min}\coloneqq \sum_{e\in E(T^*)} |\lambda^*(e)|$ and by
  construction of $\lambda'$, we have
  $\ell_{\min} = \sum_{e\in E(T)} |\lambda'(e)|$.  Since
  $(T^*,\lambda^*)\leq(T,\lambda')$, it must hold
  $\lambda_{\varepsilon}(e')\subseteq\lambda'(e)$ for all
  $e'=\parent(v')v'\in E(T_{\varepsilon})$ and $e=\parent(v)v\in E(T)$ with
  $L(T(v)) = L(T_{\varepsilon}(v'))$. Since $\lambda'$ is minimal by
  construction, we have $\lambda_{\varepsilon}(e') = \lambda'(e)$ for all
  corresponding edges $e$ and $e'$.  In particular, it must hold that
  $\lambda(e)=\emptyset$ implies $\lambda'(e) = \emptyset$ for all
  $e\in E(T)$. To see this, assume for contradiction there is some edge
  $e=uv\in E(T)$ with $\lambda(e)=\emptyset$ but
  $\lambda'(e) \neq \emptyset$.  Since $(T,\lambda)$ satisfies \AX{(C)},
  there is a path from $u$ to some leaf $y\in L(T)$ that consists of edges
  $f$ with label $\lambda(f) = \emptyset$ only and that contains the edge
  $e$.  Hence, for $x\in L(T(u))\setminus L(T(v))$, we have
  $\lca_T(x,y) = u $ and thus, $\varepsilon(x,y)=\emptyset$. However, since
  we assume that $\lambda'(e)=N'\neq\emptyset$, we obtain
  $N'\subseteq \varepsilon(x,y)\neq \emptyset$; a contradiction.  Now it is
  easy to verify that $(T,t,\lambda')$ still satisfies \AX{(C)} and
  \AX{(C1)}.

  \begin{figure}[t]
    \begin{center}
      \includegraphics[width=0.65\textwidth]{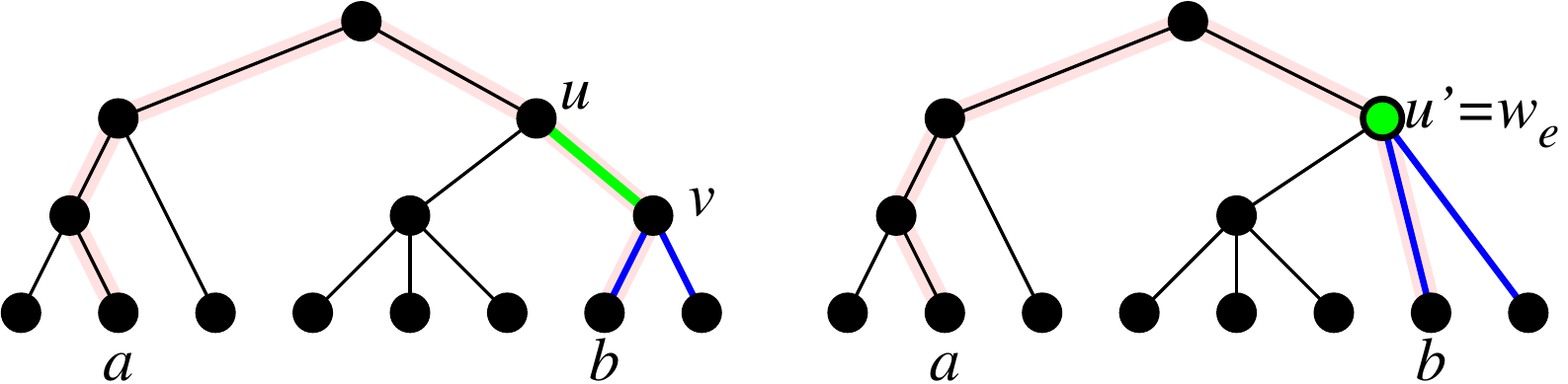}
    \end{center}
    \caption{Effect of an edge contraction on paths in $T$. All paths
      traversing the contracted edge $e=uv$ in $T$ correspond to paths in
      $T/e$ in which $e$ is contracted. All other path remain
      unchanged. Furthermore $w_e=u'$, i.e., the edge contraction
      corresponds to the deletion of $L(T(v))$ from $\HH(T)$.}
    \label{fig:contract}
  \end{figure}

  Now consider edge contractions, Fig.~\ref{fig:contract}. To obtain $T^*$
  we are only allowed to contract edges $e = uv\in E(T)$ that satisfy
  $t(u)=t(v)$ and $\lambda'(e)=\emptyset$. The latter follows from the fact
  that edges $uv$ with $t(u)\neq t(v)$ cannot be contracted without losing
  the information of at least one of the labels $t(u)$ or $t(v)$ and
  minimality of $\lambda'$, since otherwise the labels $\lambda'(e)$ do not
  contribute to the explanation of the Fitch map and thus would have been
  removed in the construction of $\lambda'$. For such an edge $e$, the tree
  $(T/e,t_{T/e},\lambda'_{T/e})$ is obtained by contracting the edge $e=uv$
  to a new vertex $w_e$ and assigning $t_{T/e}(w_e)=t(v)=t(u)$ and
  keeping the edge labels of all remaining edges. The tree
  $(T/e,t_{T/e},\lambda'_{T/e})$ then explains $(\delta,\varepsilon)$.  To
  see this, we write $y\coloneqq\lca_T(a,b)$ and
  $y'\coloneqq\lca_{T/e}(a,b)$ for distinct $a,b\in L$ and compare for
  $c\in\{a,b\}$ the path $P_{yc}$ in $T$ and $P'_{y'c}$ in $T/e$.  If $y=u$
  or $y=v$ then $y'=w_e$. The paths therefore either consist only of
  corresponding edges, in which case the edge labels are the same, or they
  differ exactly by the contraction of $e$. The latter does not affect the
  explanation of $\varepsilon(a,b)$ because $\lambda'(e)=\emptyset$. Since
  $t(u)=t(v)$, contraction of $uv$ also does not affect $\delta$.
    
  In particular, therefore, neither $u$ nor $v$ is a leaf, i.e., $e$ is an
  inner edge.  Condition \AX{(C)} is trivially preserved under contraction
  of inner edges.  Suppose $t(v)=t(u)\in M_{\emptyset}$ and thus
  $t_{T/e}(w_e)\in M_{\emptyset}$.  Since $(T,t,\lambda')$ satisfies
  \AX{(C1)} we have $\lambda'(\{v,u'\})=\lambda'(\{u,u''\}) = \emptyset$ for
  all $u'\in\child(v)$ and all $u''\in \child(u)$ and thus after
  contracting $e$ it holds that $\lambda'_{T/e}(w_e,w')=\emptyset$ for all
  $w'\in\child_{T/e}(w_e)=\child_{T}(v)\cupdot\child_{T}(u)$. Otherwise,
  $t(u)=t(v)\notin M_{\emptyset}$ and thus by construction
  $t_{T/e}(w_e)\notin M_{\emptyset}$. In summary, $(T/e,t',\lambda)$
  satisfies \AX{(C)} and \AX{(C1)}. Repeating this coarse graining until no
  further contractible inner edges are available results in the unique
  least-resolved tree $(T^*,t^*,\lambda^*)$.
\end{proof} 
Since the unique least-resolved tree $(T^*,t^*,\lambda^*)$ can be computed
in quadratic time by Thm.~\ref{thm:perf}, and it suffices by
Thm.~\ref{thm:CC} to check \AX{(C)} and \AX{(C1)} for
$(T^*,t^*,\lambda^*)$, the same performance bound applies to the
recognition of constrained tree-like pairs of maps.

We note that an analogous result holds if only \AX{(C)} or only \AX{(C1)}
is required for $(T,t,\lambda)$. Furthermore, one can extend \AX{(C1)} in
such a way that for a set $\mathcal{Q}$ of pairs $(q,m)$ with $q\in M$ and
$m\in N$ of labels that are incompatible at a vertex $v$ and an edge $vv'$
with $v'\in\child(v)$. The proof of Thm.~\ref{thm:CC} still remains valid
since also in this case no forbidden combinations of vertex an edge colors
can arise from contracting an edge $e=uv$ with $t(u)=t(v)$. In the special
case $\delta(x,y)=1\notin M_{\emptyset}$ for all $(x,y)\in \LL$, one
obtains $t^*(u)=1$ for all $u\in V(T^*)$ and thus
$(T^*,\lambda^*)=(T_{\varepsilon},\lambda_{\varepsilon})$ and \AX{(C1)}
imposes no constraint. Hence, Thm.~\ref{thm:CC} specializes to
\begin{corollary}
  A Fitch map $\varepsilon$ is type-C if and only if its least-resolved
  tree $(T_{\varepsilon},\lambda_{\varepsilon})$ satisfies \AX{(C)}.
\end{corollary}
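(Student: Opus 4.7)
The plan is to derive this corollary as an immediate specialization of Theorem~\ref{thm:CC}, following the recipe sketched in the paragraph preceding the statement. I would fix an arbitrary symbol $1\in M$, set $M_{\emptyset}=\emptyset$, and define $\delta\colon\LL\to M$ by $\delta(x,y)=1$ for all $(x,y)\in\LL$. First I would check that this constant $\delta$ is trivially a symbolic ultrametric (all three conditions, being phrased in terms of (in)equalities among at most three labels, are automatically satisfied when every value equals $1$), and identify its discriminating representation $(T_{\delta},t_{\delta})$ as the star tree on $L$ whose unique inner vertex is labeled $1$; its hierarchy is $\HH(T_{\delta})=\{L\}\cup\{\{x\}\mid x\in L\}$.

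Next I would observe that, for any Fitch map $\varepsilon$, the hierarchy $\HH(T_{\varepsilon})$ necessarily contains $L$ as well as all singletons, so $\HH(T_{\delta})\subseteq\HH(T_{\varepsilon})$, whence $\HH^{*}=\HH(T_{\delta})\cup\HH(T_{\varepsilon})=\HH(T_{\varepsilon})$ is a hierarchy. Condition~(3) of Theorem~\ref{thm:OOXX-tree} is therefore met, so $(\delta,\varepsilon)$ is tree-like and its least-resolved tree satisfies $T^{*}=T_{\varepsilon}$. By Lemma~\ref{lem:orthoclu} the induced vertex labeling is $t^{*}\equiv 1$, and by Corollary~\ref{cor:fitchclu} the induced edge labeling of minimum total size is $\lambda^{*}=\lambda_{\varepsilon}$. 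Since $1\notin M_{\emptyset}=\emptyset$, condition~\AX{(C1)} is vacuously satisfied at every vertex of $T^{*}$, so the only nontrivial constraint left from Theorem~\ref{thm:CC} is \AX{(C)}.

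Finally, I would invoke Theorem~\ref{thm:CC}: $(\delta,\varepsilon)$ is $M_{\emptyset}$-tree-like if and only if $(T^{*},t^{*},\lambda^{*})=(T_{\varepsilon},t^{*},\lambda_{\varepsilon})$ satisfies \AX{(C)}. Because $M_{\emptyset}=\emptyset$, being $M_{\emptyset}$-tree-like here is by definition just the existence of some tree explaining $(\delta,\varepsilon)$ that fulfills \AX{(C)}; and since $\delta$ is constant, any tree automatically admits the vertex labeling $t\equiv 1$ so that this reduces precisely to the existence of a tree explaining $\varepsilon$ that satisfies \AX{(C)}, i.e., to $\varepsilon$ being type-C. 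Equivalence of the two conditions in the corollary follows. There is essentially no hard step; the only thing worth checking carefully is the identification of $(T_{\delta},t_{\delta})$ as the star tree and the containment $\HH(T_{\delta})\subseteq\HH(T_{\varepsilon})$, after which Theorem~\ref{thm:CC} does all the work.
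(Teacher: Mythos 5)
Your proposal is correct and follows exactly the specialization the paper itself uses: it sets $\delta\equiv 1$ with $1\notin M_{\emptyset}$ (the paper) or $M_{\emptyset}=\emptyset$ (you, which the paper explicitly notes makes \AX{(C1)} vacuous), identifies $T^*=T_{\varepsilon}$ via the star tree $T_{\delta}$, and reads off the corollary from Theorem~\ref{thm:CC}. You have merely spelled out the details the paper leaves implicit, so no further comment is needed.
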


In \cite{Nojgaard:18a} a stronger version of condition \AX{(C)} has been
considered:
\begin{description}
\item[\AX{(C2)}] If $\lambda(\{v,u\})\ne\emptyset$ for some $u\in\child(v)$,
  then $\lambda(\{v,u'\})=\emptyset$ for \textit{all}
  $u'\in\child(v)\setminus\{u\}$.
\end{description}
This variant imposes an additional condition on the edges $e=uv$ that can
be contracted. More precisely, an inner edge of $(T,t,\lambda)$ can be
contracted without losing the explanation of $(\delta,\varepsilon)$ and
properties \AX{(C1)} and \AX{(C2)} if and only if (i) $t(u)=t(v)$, (ii)
$\lambda(e)=\emptyset$ and (iii) at most one of the the edges $uu'$,
$u'\in\child(u)$ and $vv'$, $v'\in\child(v)$ has a non-empty label. Now
consider two consecutive edges $uv$ and $vw$ with $t(u)=t(v)=t(w)$,
$\lambda(\{u,v\})=\lambda(\{v,w\})=\emptyset$ and suppose there is
$u'\child(u)$ with $\lambda(\{u,u'\})\ne\emptyset$, $w'\in\child(w)$ with
$\lambda(\{w,w'\})\ne\emptyset$, and $\lambda(\{v,v'\})=\emptyset$ for all
$v'\in\child(v)$. Then one can contract either $uv$ or $vw$ but not both
edges. Thus least-resolved trees explaining $(\delta,\varepsilon)$ and
satisfying \AX{(C1)} and \AX{(C2)} are no longer unique.

\section{Concluding Remark}

Here we have shown that symbolic ultrametrics and Fitch maps can be
combined by the simple and easily verified condition that
$\HH(T_{\delta})\cup \HH(T_{\varepsilon})$ is again a hierarchy
(Thm.~\ref{thm:OOXX-tree}), i.e., that the two least-resolved trees have a
common refinement. The least-resolved tree $(T^*,t^*,\lambda^*)$ that
simultaneously explains both $\delta$ and $\varepsilon$ is unique in this
case and can be computed in quadratic time if the label set $N$ is bounded
and $O(|L|^2 |N|)$ time in general. The closely related problem of
combining a hierarchy and \emph{symmetrized Fitch maps}, defined by
$m\in \varepsilon(x,y)$ iff there is an edge $e$ with $m\in\lambda(e)$
along the path from $x$ to $y$ \cite{Hellmuth:18a}, is NP-complete
\cite{Hellmuth:21a}. It appears that the main difference is the fact that
symmetrized Fitch maps do not have a unique least-resolved tree as
explanation. The distinction between much simpler problems in the directed
setting and hard problems in the undirected case is also reminiscent of the
reconciliation problem for trees, which are easy for rooted trees and hard
for unrooted trees, see e.g.\ \cite{Bryant2006}.

We have also seen that certain restrictions on the Fitch maps that are
related to the ``observability'' of horizontal transfer do not alter the
complexity of the problem. These observability conditions are defined in
terms of properties of the explaining trees, raising the question whether
these constraints also have a natural characterization as properties of the
Fitch maps. On a more general level, both symbolic ultrametrics and Fitch
maps arise from evolutionary scenarios comprising an embedding of the gene
tree $T$ into a species tree, with labeling functions $t$ and $\lambda$ on
$T$ encoding event-types and distinctions in the evolutionary fate of
offsprings, respectively. Here we have focused entirely on gene trees with
given labels. The embeddings into species trees are known to impose
additional constraints \cite{Hellmuth:17,LH:20}.

\medskip\noindent
\textbf{Acknowledgments.} 
This work was supported in part by the \emph{Deutsche
  Forschungs\-gemeinschaft}.

\bibliographystyle{abbrv}
\bibliography{OX-preprint}

\begin{thebibliography}{10}

\bibitem{Bansal:12}
M.~Bansal, E.~Alm, and M.~Kellis.
\newblock Efficient algorithms for the reconciliation problem with gene
  duplication, horizontal transfer and loss.
\newblock {\em Bioinformatics}, 28:i283--i291, 2012.

\bibitem{Boecker:98}
S.~B{\"o}cker and A.~Dress.
\newblock Recovering symbolically dated, rooted trees from symbolic
  ultrametrics.
\newblock {\em Adv. Math.}, 138:105--125, 1998.

\bibitem{Bryant2006}
D.~Bryant and J.~Lagergren.
\newblock Compatibility of unrooted phylogenetic trees is {FPT}.
\newblock {\em Theor. Comp. Sci.}, 351:296--302, 2006.

\bibitem{Fitch:00}
W.~Fitch.
\newblock Homology: a personal view on some of the problems.
\newblock {\em Trends Genet.}, 16:227--231, 2000.

\bibitem{Geiss:18a}
M.~Gei{\ss}, J.~Anders, P.~Stadler, N.~Wieseke, and M.~Hellmuth.
\newblock Reconstructing gene trees from {Fitch}'s xenology relation.
\newblock {\em J. Math. Biol.}, 77:1459--1491, 2018.

\bibitem{Geiss:20b}
M.~Gei{\ss}, M.~Gonz{\'a}lez~Laffitte, A.~L{\'o}pez~S{\'a}nchez, D.~Valdivia,
  M.~Hellmuth, M.~Hern{\'a}ndez~Rosales, and P.~Stadler.
\newblock Best match graphs and reconciliation of gene trees with species
  trees.
\newblock {\em J. Math. Biol.}, 80:1459--1495, 2020.

\bibitem{Harel:84}
D.~Harel and R.~Tarjan.
\newblock Fast algorithms for finding nearest common ancestors.
\newblock {\em SIAM J. Computing}, 13:338--355, 1984.

\bibitem{Hellmuth:17}
M.~Hellmuth.
\newblock Biologically feasible gene trees, reconciliation maps and informative
  triples.
\newblock {\em Alg. Mol. Biol.}, 12:23, 2017.

\bibitem{Hellmuth:13a}
M.~Hellmuth, M.~Hernandez-Rosales, K.~Huber, V.~Moulton, P.~Stadler, and
  N.~Wieseke.
\newblock Orthology relations, symbolic ultrametrics, and cographs.
\newblock {\em J. Math. Biol.}, 66:399--420, 2013.

\bibitem{Hellmuth:18a}
M.~Hellmuth, Y.~Long, M.~Gei{\ss}, and P.~Stadler.
\newblock A short note on undirected fitch graphs.
\newblock {\em Art Discr. Appl. Math.}, 1:P1.08, 2018.

\bibitem{Hellmuth:21q}
M.~Hellmuth, D.~Schaller, and P.~Stadler.
\newblock Compatibility of partitions, hierarchies, and split systems.
\newblock 2021.
\newblock submitted; arXiv 2104.14146.

\bibitem{Hellmuth:20c}
M.~Hellmuth, C.~Seemann, and P.~Stadler.
\newblock Generalized fitch graphs {II}: Sets of binary relations that are
  explained by edge-labeled trees.
\newblock {\em Discr. Appl. Math.}, 283:495--511, 2020.

\bibitem{Hellmuth:21a}
M.~Hellmuth, C.~R. Seemann, and P.~F. Stadler.
\newblock Generalized {Fitch} graphs {III}: Symmetrized {Fitch} maps and sets
  of symmetric binary relations that are explained by unrooted edge-labeled
  trees.
\newblock {\em Discr. Math. Theor. Comp. Sci}, 23(1):13, 2021.

\bibitem{HSW:17}
M.~Hellmuth, P.~Stadler, and N.~Wieseke.
\newblock The mathematics of xenology: Di-cographs, symbolic ultrametrics,
  2-structures and tree-representable systems of binary relations.
\newblock {\em J. Math. Biol.}, 75:199--237, 2017.

\bibitem{Jones:17}
M.~Jones, M.~Lafond, and C.~Scornavacca.
\newblock Consistency of orthology and paralogy constraints in the presence of
  gene transfers.
\newblock 2017.
\newblock arXiv 1705.01240.

\bibitem{LH:20}
M.~Lafond and M.~Hellmuth.
\newblock Reconstruction of time-consistent species trees.
\newblock {\em Alg. Mol. Biol.}, 15:16, 2020.

\bibitem{Nojgaard:18a}
N.~N{\o}jgaard, M.~Gei{\ss}, D.~Merkle, P.~Stadler, N.~Wieseke, and
  M.~Hellmuth.
\newblock Time-consistent reconciliation maps and forbidden time travel.
\newblock {\em Alg. Mol. Biol.}, 13:2, 2018.

\bibitem{Schaller:21v}
D.~Schaller, M.~Hellmuth, and P.~F. Stadler.
\newblock A linear-time algorithm for the common refinement of rooted
  phylogenetic trees on a common leaf set.
\newblock 2021.
\newblock submitted; arXiv 2107.00072.

\bibitem{Semple:03}
C.~Semple and M.~Steel.
\newblock {\em Phylogenetics}.
\newblock Oxford University Press, Oxford UK, 2003.

\bibitem{Tofigh:11}
A.~Tofigh, M.~Hallett, and J.~Lagergren.
\newblock Simultaneous identification of duplications and lateral gene
  transfers.
\newblock {\em IEEE/ACM Trans. Comp. Biol. Bioinf.}, 8(2):517--535, 2011.

\end{thebibliography}

\end{document}